\theoremstyle{plain}
\newtheorem{theorem}{Theorem}[section]
\newtheorem{lemma}[theorem]{Lemma}
\newtheorem{proposition}[theorem]{Proposition}
\newtheorem{corollary}[theorem]{Corollary}
\theoremstyle{definition}
\newtheorem{example}[theorem]{Example}
\newtheorem{problem}[theorem]{Problem}
\newcommand{\term}[1]{{\textit{\textbf{#1}}}}
\newcommand{\abs}[1]{\lvert#1\rvert}
\newcommand{\norm}[1]{\lVert#1\rVert}
\title[Uo-convergence]{Some loose ends on unbounded order convergence}
\date{\today}
\keywords{Unbounded order convergence, almost everywhere convergence, vector and
Banach lattices, Universal completion.}
\subjclass[2010]{46A40, 46B42}
\author[H. Li]{Hui Li}
\address{School of Electrical Engineering, Southwest Jiaotong University,
Chengdu, Sichuan,
China, 610000.}
\email{lihuiqc@my.swjtu.edu.cn}
\author[Z. Chen]{Zili Chen}
\address{School of Mathematics, Southwest Jiaotong University, Chengdu, Sichuan,
China, 610000.}
\email{zlchen@home.swjtu.edu.cn}
\begin{document}

\begin{abstract}
The notion of almost everywhere convergence has been generalized to
vector lattices as unbounded order convergence, which proves to be a very
useful tool in the theory of vector and Banach lattices. In this short note, we
establish some new results on unbounded order convergence that tie up some loose
ends. 
In particular, we show that every norm bounded positive increasing net in an
order
continuous Banach lattice is uo-Cauchy and that every uo-Cauchy net in an order
continuous Banach lattice has a uo-limit in the universal completion.
\end{abstract}

\maketitle

\section{Introduction}
The notion of unbounded order convergence (uo-convergence) generalizes the
notion of almost everywhere
convergence from function spaces to vector lattices. 
It was first introduced by Nakano in \cite{N:48} and was later used by DeMarr in
\cite{D:64}. In \cite{W:77}, it was used by Wickstead to characterize discrete
Banach
lattices and order continuous Banach lattices which also have order continuous
dual spaces. Lately, uo-convergence was systematically investigated in
\cite{G:14,GTX:16,GX:14} and applied there to the study of abstract martingales,
Banach lattice geometry, and positive operators. A closely related notion of
unbounded norm convergence is introduced and studied in
\cite{DOT:16,KMT:16,TR:04}.

In this short note, we establish some further results on uo-convergence that tie
up some loose ends. We first study order convergence properties of positive
increasing nets. 
We prove that if the order continuous dual of a Banach lattice $X$ separates
points then every norm bounded positive increasing net in $X$ is uo-Cauchy
(Theorem~\ref{uo-cauchy}).
This implies, in particular, that every norm bounded positive increasing
sequence in a Banach function space over a $\sigma$-finite measure space
converges a.e.~to a \emph{real-valued} function.
We also link universal completeness with uo-convergence. We prove that an
order complete vector lattice with the countable sup property is universally
complete if and only if it is uo-complete, i.e., every uo-Cauchy net is
uo-convergent (Proposition~\ref{uo-complete}).
It follows that if the order continuous dual of a vector lattice $X$ separates
points then every uo-Cauchy net in $X$
has a uo-limit in
the universal completion $X^{\rm u}$ of $X$ (Corollary~\ref{uo-complete3}).

We refer to ~\cite{AB:06,MN:91} for all unexplained terminology and standard
facts on vector and Banach lattices. All vector lattices in this paper are
assumed to be Archimedean. 
Recall that a net $(x_\alpha)_{\alpha\in\Gamma}$ in a
vector lattice $X$ is said to \term{converge in order} to $x\in X$, written as
$x_\alpha\xrightarrow{\rm o}x$, if there exists another net
$(a_\gamma)_{\gamma\in \Lambda}$ in $X$ satisfying $a_\gamma\downarrow 0$ and
for any $\gamma\in\Lambda$ there exists $\alpha_0\in \Gamma$ such that
$\abs{x_\alpha-x}\leq a_\gamma$ for all $\alpha\geq \alpha_0$. A net
$(x_\alpha)$ is said to \term{converge in unbounded order} (uo-converge for
short) to $x\in X$, written as $x_\alpha\xrightarrow{\rm uo}x$, if
$\abs{x_\alpha-x}\wedge y\xrightarrow{\rm o}0$ for any $y\in X_+$. A linear
functional $f$ on $X$ is \term{strictly positive} if $f(x)>0$ whenever $x>0$.
A linear functional $f$ on $X$ is
\term{order continuous} if $f(x_\alpha)\rightarrow 0$ whenever
$x_\alpha\xrightarrow{\rm o}0$. Denote by $X_n^\sim$ the collection of all order
continuous functionals on $X$. Recall that a vector lattice $X$ is said to be
\term{order complete} (or Dedekind complete) if every order bounded subset of
$X$ has a supremum. It is well-known that in an order complete vector lattice,
an order bounded net $(x_\alpha)$ converges in order to $0$ if and only if
$\inf_\alpha \sup_{\beta\ge \alpha}\abs{x_\beta}=0$.  
Recall from \cite[Theorem~2.24]{AB:06} that every Archimedean vector lattice $X$ has
a (unique up to lattice isomorphisms) \term{order completion}  $X^\delta$, which is order complete and contains $X$ as a majorizing and
order dense sublattice. Finally, recall that a vector lattice is said to have the
\term{countable sup property} if any subset having a supremum admits an at most
countable subset having the same supremum.

\section{Results}\label{uo}

Recall that a net $(x_\alpha)$ in a vector lattice $X$ is said to be \term{order
Cauchy} if the doubly indexed net
$(x_\alpha-x_{\alpha'})_{(\alpha,{\alpha'})}$ order converges to $0$, and is
said to be \term{uo-Cauchy} if $(x_\alpha-x_{\alpha'})_{(\alpha,{\alpha'})}$
uo-converges to $0$. The proof of the following lemma is essentially contained
in that of \cite[Proposition~5.7]{GTX:16}.

\begin{lemma}\label{order-cauchy}
Every order bounded positive increasing net in a vector lattice $X$ is order
Cauchy.
\end{lemma}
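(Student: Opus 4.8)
The plan is to show that an order bounded positive increasing net $(x_\alpha)_{\alpha\in\Gamma}$ is order Cauchy, which requires exhibiting a net decreasing to $0$ that eventually dominates every difference $\abs{x_\alpha-x_{\alpha'}}$. Since the net is order bounded, say by some $u\in X_+$, and increasing, I would first pass to the order completion $X^\delta$ so that suprema are available; there the increasing bounded net has a supremum $x=\sup_\alpha x_\alpha$, and $x\in X^\delta$. The natural candidate for the controlling net is $a_\alpha := x - x_\alpha$, which satisfies $a_\alpha\downarrow 0$ in $X^\delta$ precisely because $x$ is the supremum. The point of the lemma is that this control must take place inside $X$, not merely in $X^\delta$, so some care is needed to keep the dominating net in the original space.

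Let me think about this more carefully. For the doubly indexed net $(x_\alpha-x_{\alpha'})_{(\alpha,\alpha')}$ the index set is $\Gamma\times\Gamma$ with the product order, and I need $\abs{x_\alpha-x_{\alpha'}}$ to be eventually small. Given any index $\gamma=(\beta,\beta')$ in the controlling family, the tail consists of pairs $(\alpha,\alpha')\ge(\beta,\beta')$, i.e. $\alpha\ge\beta$ and $\alpha'\ge\beta'$. Using monotonicity, for such pairs one has $x_\alpha, x_{\alpha'}\ge x_\beta$ (taking $\beta=\beta'$ along the diagonal, which is cofinal), so $\abs{x_\alpha-x_{\alpha'}} = (x_\alpha\vee x_{\alpha'}) - (x_\alpha\wedge x_{\alpha'}) \le x - x_\beta$. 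Thus the diagonal net $b_\beta := x - x_\beta$ dominates the tails, and I would argue that $b_\beta\downarrow 0$. The key computation is the domination inequality $\abs{x_\alpha-x_{\alpha'}}\le x-x_\beta$ valid whenever $\alpha,\alpha'\ge\beta$, which follows from $x_\alpha\vee x_{\alpha'}\le x$ and $x_\alpha\wedge x_{\alpha'}\ge x_\beta$.

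The main obstacle is ensuring that everything happens within $X$ rather than its completion. If one only works in $X^\delta$, the argument above establishes that the net is order Cauchy \emph{in $X^\delta$}, but the definition of order convergence in the statement refers to a dominating net $a_\gamma\downarrow 0$ living in $X$. I would resolve this by recalling that $X$ is order dense in $X^\delta$ and that, for order convergence to $0$ of a net already lying in $X$, it is equivalent to test against a decreasing net in $X^\delta$: concretely, since $x-x_\beta\downarrow 0$ in $X^\delta$ and $X$ is majorizing and order dense, for each such element I can find elements of $X$ squeezed appropriately, or invoke the standard fact that order convergence is preserved between a lattice and its order completion. The cleanest route is to note that order convergence of a net in $X$ does not depend on whether it is computed in $X$ or in any order complete overlattice such as $X^\delta$, so it suffices to verify $x-x_\beta\downarrow 0$ in $X^\delta$, which is immediate from $x=\sup_\beta x_\beta$.

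Alternatively, and perhaps more directly matching the definition, I would avoid the completion entirely when $X$ is not assumed order complete by observing that the lemma only asserts order Cauchyness, for which I can use the dominating net $a_{(\beta,\beta')} := u - (x_\beta\wedge x_{\beta'})$ or simply track the tails via the increasing structure; but the completion argument is the most transparent, so I expect the write-up to pass to $X^\delta$, form $x=\sup_\alpha x_\alpha$, verify the domination inequality on tails, and conclude.
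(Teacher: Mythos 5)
Your proposal is correct and takes essentially the same route as the paper: pass to the order completion $X^\delta$, where the increasing order bounded net order converges to its supremum and is therefore order Cauchy, then pull order Cauchyness back to $X$ via the standard transfer fact (\cite[Proposition~1.5]{AS:05}, \cite[Corollary~2.9]{GTX:16}) that order convergence of a net from $X$ passes between $X$ and $X^\delta$. One small caution: that transfer is \emph{not} valid for an arbitrary order complete overlattice, as your wording suggests in passing---it relies on $X$ being order dense and majorizing in $X^\delta$---but since you only apply it to the order completion, your argument stands.
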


\begin{proof}
Let $(x_{\alpha})$ be a net in $X$ such that $0 \leq x_{\alpha}\uparrow \leq x$
for some $x$. Then $(x_\alpha)$ is order convergent in the order completion
$X^\delta$ of $X$,  and is therefore, order Cauchy in $X^\delta$. It follows
from \cite[Proposition~1.5]{AS:05} (see also \cite[Corollary~2.9]{GTX:16}) that
$(x_\alpha)$ is order Cauchy in $X$.
\end{proof}

This fact is useful at times. For example, it implies that if every order
Cauchy net is order convergent then the vector lattice is order complete. It is
well-known that the
converse is also true (cf.~\cite[Proposition~2.3]{GTX:16}). Thus, the name of
order completeness is
justified: a vector lattice is  \emph{order complete} if and only if every order
Cauchy net is order
convergent. It also gives a short proof of the fact that every order continuous
Banach lattice is order complete. Indeed, if $0 \leq x_{\alpha} \uparrow \leq
x$, then $(x_\alpha)$ being order Cauchy, it is also norm Cauchy, and is
therefore,
norm convergent. The norm limit is also the supremum of $(x_\alpha)$
(\cite[Theorem~3.46]{AB:06}).

These remarks justify the importance of the following norm bounded version of
Lemma~\ref{order-cauchy}. We first establish the following technical lemma,
which generalizes \cite[Corollary~3.5]{GTX:16} and will be used in the proof
of Theorem~\ref{uo-cauchy}.

\begin{lemma}\label{uo-simple}
Let $X$ be a vector lattice and $D$ be a
set in $X_+$. The following are equivalent.
\begin{enumerate}
\item\label{uo-1} The band generated by $D$ is $X$. 
\item\label{uo-2} For any $x\in X_+$, $x \wedge d=0$ for all $d\in D$ implies
$x=0$. 
\item\label{uo-3} For any net $(x_\alpha)$ in $X_+$, 
$x_\alpha\wedge d \xrightarrow{\mathrm{o}} 0$ for any $d\in D$ implies $x_\alpha
\xrightarrow{\rm uo} 0$.
\end{enumerate}
\end{lemma}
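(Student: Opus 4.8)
The plan is to handle the two easy implications quickly and reserve the real work for $(2)\Rightarrow(3)$. For $(1)\Leftrightarrow(2)$ I would invoke the standard fact (\cite{AB:06}) that in an Archimedean vector lattice the band generated by $D$ is the double disjoint complement $D^{dd}$; thus $(1)$ reads $D^{dd}=X$, which is equivalent to $D^{d}=\{0\}$. Since a disjoint complement is an ideal and is therefore determined by its positive part, the condition $D^{d}=\{0\}$ says exactly that the only $x\in X_+$ with $x\wedge d=0$ for every $d\in D$ is $x=0$, i.e. $(2)$. For $(3)\Rightarrow(2)$, given $x\in X_+$ disjoint from every $d\in D$, I would apply $(3)$ to the constant net $x_\alpha\equiv x$: the hypothesis $x_\alpha\wedge d=0\xrightarrow{\rm o}0$ holds trivially, so $x\xrightarrow{\rm uo}0$, meaning $x\wedge y\xrightarrow{\rm o}0$ for all $y\in X_+$; taking $y=x$ and using that the net is constant forces $x=0$.

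The heart of the argument is $(2)\Rightarrow(3)$. First I would enlarge the class of admissible test elements: from $x_\alpha\wedge d\xrightarrow{\rm o}0$ for all $d\in D$, the finite distributive law gives $x_\alpha\wedge(d_1\vee\cdots\vee d_k)=\bigvee_i(x_\alpha\wedge d_i)\xrightarrow{\rm o}0$, and the inequality $a\wedge(nb)\le n(a\wedge b)$ then yields $x_\alpha\wedge z\xrightarrow{\rm o}0$ for every $z$ in the ideal $I$ generated by $D$. To promote this from $I$ to all of $X_+$ I would pass to the order completion $X^\delta$. Order convergence of an order bounded net in $X$ may equally be computed in $X$ or in $X^\delta$ (\cite{AS:05,GTX:16}), so these relations persist in $X^\delta$; moreover, order density of $X$ in $X^\delta$ together with $(2)$ shows $D^{d}=\{0\}$ in $X^\delta$, i.e. the band generated by $D$ is all of $X^\delta$. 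Now fix $y\in X_+$. The net $(x_\alpha\wedge y)$ is order bounded by $y$, so in the order complete lattice $X^\delta$ its order behaviour is controlled by $w:=\inf_\alpha\sup_{\beta\ge\alpha}(x_\beta\wedge y)$. Using the identity $w\wedge z=\inf_\alpha(\sup_{\beta\ge\alpha}(x_\beta\wedge y)\wedge z)$ and the infinite distributive law $\sup_{\beta\ge\alpha}(x_\beta\wedge y)\wedge z=\sup_{\beta\ge\alpha}(x_\beta\wedge y\wedge z)$, together with $x_\beta\wedge y\wedge z\le x_\beta\wedge z$, I would obtain $w\wedge z\le\inf_\alpha\sup_{\beta\ge\alpha}(x_\beta\wedge z)=0$ for every $z\in I_+$, hence $w\wedge d=0$ for all $d\in D$. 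Since $D^{d}=\{0\}$ in $X^\delta$ this forces $w=0$, i.e. $x_\alpha\wedge y\xrightarrow{\rm o}0$ in $X^\delta$; transferring back along the order dense inclusion gives $x_\alpha\wedge y\xrightarrow{\rm o}0$ in $X$, and as $y\in X_+$ was arbitrary, $x_\alpha\xrightarrow{\rm uo}0$.

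I expect the main obstacle to be precisely the passage from test elements in the ideal $I$ to an arbitrary $y\in X_+$. The naive route writes $y$ as a supremum of elements of $I$ and hopes the associated order null nets remain null after taking the supremum, which is false in general, since suprema of order null nets need not be order null. The device that circumvents this is to work inside the order complete $X^\delta$, encode the whole order limit in the single element $w$, and then reduce the problem to a disjointness computation through the infinite distributive laws, so that hypothesis $(2)$ is applied to the one residual element $w$ rather than to a limit of a family. Verifying that order convergence is preserved in both directions between $X$ and $X^\delta$, and keeping track of which distributive identity is genuinely needed versus which is trivial, are the routine points to check carefully.
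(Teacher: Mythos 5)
Your proposal is correct and follows essentially the same route as the paper: both prove $(1)\Leftrightarrow(2)$ via $D^{\rm d}=\{0\}$, prove $(3)\Rightarrow(2)$ with a constant net, and prove $(2)\Rightarrow(3)$ by passing to the order completion $X^\delta$ (using \cite{AS:05,GTX:16} to transfer o- and uo-convergence) and then showing, via the infinite distributive laws, that $w=\inf_\alpha\sup_{\beta\ge\alpha}(x_\beta\wedge y)$ is disjoint from $D$ and hence zero. Your intermediate enlargement from $D$ to the ideal it generates is harmless but unnecessary, since the disjointness computation only ever needs test elements $d\in D$, exactly as in the paper's displayed identity.
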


\begin{proof}
Notice that the band $B$ generated by $D$ is $X$ iff $B^{\rm d}=\{0\}$. Since
$B^{\rm d}=D^{\rm d}$,  the equivalence of \eqref{uo-1} and \eqref{uo-2} follows
immediately.
Suppose now \eqref{uo-2} holds. Since $X$ is order dense in $X^\delta$, it is
easily seen that if $x \wedge d=0$
for some $0\leq x\in X^\delta$ and all $d\in D$ then $x=0$. Moreover, by
\cite[Corollary~2.9 and Theorem~3.2]{GTX:16}, order and uo-convergence of a net
in $X$ can pass
freely between $X$ and $X^\delta$ if the limit is in $X$. Thus for the
implication \eqref{uo-2}$\Rightarrow$\eqref{uo-3}, we may
assume, by passing to $X^\delta$, that $X$ is order complete.
Pick any $x\in X_+$. Since $x_\alpha\wedge d\xrightarrow{\rm o}0$, we have
$$[\inf_{\alpha} \sup_{\beta \ge \alpha} (x_\beta\wedge x)]\wedge
d=\inf_{\alpha} \sup_{\beta \ge \alpha} (x_\beta\wedge x\wedge
d)=[\inf_{\alpha}\sup\limits_{\beta \ge \alpha}(x_\beta \wedge d)] \wedge
x=0.$$ 
Therefore, $\inf_\alpha\sup_{\beta\geq \alpha}x_\beta \wedge x=0$,
i.e.~$x_\alpha\wedge x\xrightarrow{\rm o}0$. It follows that
$x_\alpha\xrightarrow{\rm uo}0$. This proves
\eqref{uo-2}$\Rightarrow$\eqref{uo-3}.
Suppose now \eqref{uo-3} holds but \eqref{uo-2} fails. Then we can take $x>0$
such that $x\perp d$ for all $d\in D$. Put $x_n=x$ for all $n\geq 1$. Then
$x_n\wedge d\xrightarrow{\rm o}0$ for all $d\in D$ but $x_n\xrightarrow{\rm
uo}x\neq 0$, contradicting \eqref{uo-3}. This proves 
\eqref{uo-3}$\Rightarrow$\eqref{uo-2}.
\end{proof}

\begin{theorem}\label{uo-cauchy}
Let $(x_\alpha) $ be a norm bounded positive increasing net in a Banach lattice
$X$. Suppose that
$X^{\thicksim}_n$ separates points of $X$ (this is satisfied if $X$ is order
continuous). Then $(x_\alpha)$ is uo-Cauchy in
$X$.
\end{theorem}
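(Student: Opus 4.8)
The plan is to deduce uo-Cauchyness from order convergence of suitable \emph{truncated differences}, and then to patch these together into genuine uo-convergence by Lemma~\ref{uo-simple}. Write $\Gamma$ for the index set of $(x_\alpha)$ and consider the net $z_{(\alpha,\alpha')}:=\abs{x_\alpha-x_{\alpha'}}$ in $X_+$, indexed by $\Gamma\times\Gamma$; since $(x_\alpha)$ is uo-Cauchy exactly when $z_{(\alpha,\alpha')}\xrightarrow{\rm uo}0$, it suffices to produce a set $D\subseteq X_+$ whose generated band is $X$ and to check $z_{(\alpha,\alpha')}\wedge d\xrightarrow{\rm o}0$ for every $d\in D$. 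For $D$ I would take the union of the carriers $C_f=N_f^{\rm d}$, where $N_f=\{x\in X:f(\abs x)=0\}$ is the (band) null ideal of a positive $f\in X_n^\sim$, as $f$ ranges over $(X_n^\sim)_+$. Since $X_n^\sim$ separates points, $\bigcap_f N_f=\{0\}$, so $D^{\rm d}=\{0\}$ and the band generated by $D$ is $X$. The one place the Banach-space hypothesis enters is here: each such $f$ is positive, hence norm continuous, so norm boundedness of $(x_\alpha)$ forces the increasing real net $f(x_\alpha)$ to be bounded, and I record the finite limit $\phi(f):=\lim_\alpha f(x_\alpha)=\sup_\alpha f(x_\alpha)$.

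Fix $d\in C_f\cap X_+$. Because order convergence to a limit lying in $X$ passes freely between $X$ and its order completion $X^\delta$ (the transfer results of \cite{GTX:16} recalled after Lemma~\ref{uo-simple}), I may verify $z_{(\alpha,\alpha')}\wedge d\xrightarrow{\rm o}0$ inside the order complete lattice $X^\delta$, where the inf-over-tails-of-sups criterion applies. The crucial estimate exploits monotonicity: given $\beta\in\Gamma$ and $\mu,\nu\ge\beta$, choosing $\rho\ge\mu,\nu$ gives $\abs{x_\mu-x_\nu}\le x_\rho-x_\beta$, whence $z_{(\mu,\nu)}\wedge d\le a_\rho:=(x_\rho-x_\beta)\wedge d$. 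Consequently the tail supremum $s_\beta:=\sup\{z_{(\mu,\nu)}\wedge d:\mu,\nu\ge\beta\}$ is dominated by $w_\beta:=\sup_{\rho\ge\beta}a_\rho$, and $(a_\rho)_{\rho\ge\beta}$ is a genuinely \emph{increasing} net, bounded above by $d$, with $a_\rho\uparrow w_\beta$. This is exactly the feature that lets me commute $f$ past a supremum: for any $x\in X$ with $0<x\le w_\beta$ one has $x\wedge a_\rho\uparrow x$ by order continuity of the lattice operations, so $f(x)=\lim_\rho f\bigl(x\wedge a_\rho\bigr)\le\lim_\rho f(a_\rho)\le\lim_\rho f(x_\rho-x_\beta)=\phi(f)-f(x_\beta)$.

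Now set $s:=\inf_\beta s_\beta\in[0,d]$; as the diagonal is cofinal in $\Gamma\times\Gamma$, the identity $s=0$ is equivalent to $z_{(\alpha,\alpha')}\wedge d\xrightarrow{\rm o}0$. If $s>0$, order density of $X$ in $X^\delta$ yields $x\in X$ with $0<x\le s\le w_\beta$ for every $\beta$; since $d\in C_f$ and $C_f$ is a band, $x\in C_f$, so $f(x)>0$ by strict positivity of $f$ on its carrier. But the estimate above gives $f(x)\le\phi(f)-f(x_\beta)$ for all $\beta$, hence $f(x)\le\phi(f)-\sup_\beta f(x_\beta)=0$, a contradiction. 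Therefore $s=0$, and Lemma~\ref{uo-simple} delivers $z_{(\alpha,\alpha')}\xrightarrow{\rm uo}0$, i.e.\ $(x_\alpha)$ is uo-Cauchy. The main obstacle is precisely the step where $f$ must be pushed past a supremum: in general $f\bigl(\sup_i a_i\bigr)\ne\sup_i f(a_i)$, and the naive truncation $x_\alpha\wedge d$ discards the overflowing mass that norm boundedness is meant to control. Dominating the unwieldy $s_\beta$ by the monotone $w_\beta$ is what turns the problem into one that order continuity handles cleanly, and it is there that both the increasing hypothesis and the finiteness of $\phi(f)$ (hence norm boundedness) are indispensable.
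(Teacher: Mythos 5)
Your proof is correct, and while its outer frame coincides with the paper's --- both verify the criterion of Lemma~\ref{uo-simple} for the same set $D=\bigcup_{0\leq f\in X_n^\sim}(C_f)_+$, with the separation hypothesis used only to get $D^{\mathrm{d}}=\{0\}$ and norm boundedness used only through $\phi(f)=\sup_\alpha f(x_\alpha)<\infty$ --- the core step, showing $\abs{x_\alpha-x_{\alpha'}}\wedge d\xrightarrow{\mathrm{o}}0$ for $d\in (C_f)_+$, is genuinely different. The paper renorms $C_f$ by $\norm{x}_L=f(\abs{x})$, passes to the norm completion $\widetilde{C_f}$, which is an $L_1(\mu)$-space and hence a KB space, concludes that the increasing norm bounded net $(P_fx_\alpha)$ order converges there, and transfers uo-Cauchyness back to $X$ via \cite[Theorem~3.2]{GTX:16}. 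You bypass the $L_1$/KB machinery entirely: your monotone domination $s_\beta\leq w_\beta=\sup_{\rho\geq\beta}(x_\rho-x_\beta)\wedge d$, together with $x\wedge a_\rho\uparrow x$ (which is really the distributive law $\sup_\rho(x\wedge a_\rho)=x\wedge\sup_\rho a_\rho$ rather than ``order continuity of the lattice operations'') and order continuity of $f$, reproves by hand exactly the fragment of the KB property that is needed. What this buys: your argument is more self-contained (no AL-space representation of $(C_f,\norm{\cdot}_L)$, no band projection $P_f$), and it avoids the extension of the norm of $X$ to a complete norm on $X^\delta$ from \cite[pp.~27]{AA:02}, which the paper's reduction to the order complete case invokes to keep ``norm bounded'' meaningful after passing to $X^\delta$; you evaluate $f(x_\alpha)$ in $X$ itself and use $X^\delta$ only as a place where the inf-of-tail-sups criterion for order convergence of order bounded nets is available, so the Banach structure enters solely through the automatic norm continuity of positive functionals. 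The paper's route is shorter given the cited machinery and yields slightly more information (actual order and norm convergence of $(P_fx_\alpha)$ in $\widetilde{C_f}$). Two steps you pass over quickly are fine but worth making explicit: the identification $C_f^{\mathrm{d}}=N_f$, used to deduce $D^{\mathrm{d}}=\{0\}$, requires $N_f$ to be a band, which holds precisely because $f$ is order continuous; and the transfer of $\abs{x_\alpha-x_{\alpha'}}\wedge d\xrightarrow{\mathrm{o}}0$ from $X^\delta$ back to $X$ rests on \cite[Corollary~2.9]{GTX:16}, as the paper itself recalls in the proof of Lemma~\ref{uo-simple}.
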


\begin{proof}
Since $X$ is order dense and majorizing in $X^\delta$, it follows from
\cite[Theorem~1.65]{AB:06} that each order continuous functional on 
$X$ extends uniquely to an order continuous functional on $X^\delta$. By order
density of $X$ in $X^\delta$, it is easily seen that each order continuous
functional on $X^\delta$ comes from such an extension, and moreover,
$(X^\delta)_n^\sim$ also
separates points of $X^\delta$. It is also known that the norm of $X$ extends to
a complete norm on $X^\delta$ (\cite[pp.~27]{AA:02}). Thus, by passing to
$X^\delta$, we may assume that $X$ is order complete.
For each $0\leq f\in X_n^\sim$, put $N_f:=\{x\in X:{f}(\abs{x})=0\}$, the null
ideal of $f$, and put $C_f:=N_f^{\mathrm{d}}$, the carrier of $f$. Put
$D=\cup_{0\leq f\in X^{\thicksim}_n} ({C}_f)_+$. If $x\wedge d=0$ for all $d\in
D$, then $x\geq 0$ and $x\in (C_f)^{\mathrm{d}}= N_f$ for all $0\leq f\in
X_n^\sim$, so that
$f(x)=0$ for all $0\leq f\in X_n^\sim$. It follows that $x=0$. Thus
$D$ satisfies the  statement of Lemma~\ref{uo-simple}\eqref{uo-2}.

Now pick $d\in D$, say, $d\in C_f$ for some $0\leq f\in X_n^\sim$. For each
$x\in
C_f$, put $\norm{x}_L=f(\abs{x})$. Since $f$ is strictly positive on $C_f$,
$\norm{\cdot}_L$ is a norm. It is well-known that the norm completion,
$\widetilde{C_f}$, of $(C_f,\norm{\cdot}_L)$ is some $L_1(\mu)$-space, and that
$C_f$ sits in $\widetilde{C_f}$ as
an ideal (cf.~\cite[Section~4]{GTX:16}). Let $P_f$ be the band projection for
$C_f$.
Then $\sup_\alpha \norm{P_fx_\alpha}_L=\sup_\alpha f(P_fx_\alpha)=\sup_\alpha
f(x_\alpha)<\infty$. Being increasing in $\widetilde{C_f}$ which is a KB space,
$(P_fx_\alpha)$ is convergent in order
(and also in norm) in $\widetilde{C_f}$. It follows that $(P_fx_\alpha)$ is
order Cauchy, and therefore, uo-Cauchy, in $\widetilde{C_f}$. It is uo-Cauchy in
$C_f$
and in $X$, by \cite[Theorem~3.2]{GTX:16}. It follows that
$$\abs{x_\alpha-x_{\alpha'}}\wedge d=P_f(\abs{x_\alpha-x_{\alpha'}}\wedge
d)=\abs{P_f x_\alpha-P_fx_{\alpha'}}\wedge d\xrightarrow{\rm o}0$$ in $X$. By
Lemma ~\ref{uo-simple}, $(x_\alpha)$ is uo-Cauchy in $X$.
\end{proof}

\begin{problem}
Can one remove the assumption that $X_n^\sim$ separates points of $X$ in
Theorem~\ref{uo-cauchy}?
\end{problem}

The following corollary may have appeared in the literature. Recall first that a
Banach function space over a  measure space $(\Omega,\Sigma,\mu)$
is an ideal in $L_0(\Omega,\Sigma,\mu)$ endowed with a complete lattice norm.

\begin{corollary}\label{ael}
Let $X$ be a Banach function space over a $\sigma$-finite measure space. Then
any norm bounded positive increasing sequence in $X$ converges
a.e.~to a real-valued measurable function.
\end{corollary}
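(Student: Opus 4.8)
The plan is to deduce Corollary~\ref{ael} from Theorem~\ref{uo-cauchy} by specializing to the concrete setting of a Banach function space and then translating uo-convergence into a.e.~convergence. The first step is to verify that the hypothesis of Theorem~\ref{uo-cauchy} is met, namely that $X_n^\sim$ separates the points of $X$. Since $X$ is a Banach function space over a $\sigma$-finite measure space $(\Omega,\Sigma,\mu)$, it is an ideal in $L_0(\Omega,\Sigma,\mu)$ with an order continuous-type structure on its dual; concretely, by $\sigma$-finiteness one can produce a strictly positive order continuous functional (for instance, integration against a fixed strictly positive $L_1$-density $w$ with $\int_\Omega w\,d\mu=1$, restricted appropriately to $X$), so $X_n^\sim$ separates points. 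This lets me apply Theorem~\ref{uo-cauchy} to conclude that any norm bounded positive increasing \emph{sequence} $(x_n)$ in $X$ is uo-Cauchy.

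The second, and conceptually central, step is to pass from uo-Cauchy to an actual a.e.~limit. The key fact I would invoke is the standard identification (from the systematic study of uo-convergence, e.g.~\cite{GTX:16}) that in a Banach function space over a $\sigma$-finite measure space, uo-convergence to zero coincides with a.e.~convergence to zero. Thus being uo-Cauchy means the sequence $(x_n)$ is a.e.~Cauchy as a sequence of measurable functions, which forces pointwise a.e.~convergence to \emph{some} measurable function $g \in L_0(\Omega,\Sigma,\mu)$. Because the $x_n$ are positive and increasing, the a.e.~limit $g=\sup_n x_n$ exists a.e.~as an element of the extended-real-valued measurable functions in any case; the content of the uo-Cauchy conclusion is that this supremum is \emph{finite} a.e., i.e.~$g$ is genuinely real-valued rather than taking the value $+\infty$ on a positive-measure set.

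The final step is to make the real-valuedness explicit. Here I would argue that if $(x_n)$ increased to $+\infty$ on a set $A$ of positive measure, then for the fixed strictly positive order continuous functional $f$ above we would have the band-projection/carrier argument of Theorem~\ref{uo-cauchy} producing norm-bounded but non-convergent behaviour, contradicting uo-Cauchyness; more directly, uo-Cauchy in this setting forces the pointwise increasing limit to be finite a.e., since an a.e.~Cauchy sequence has a finite a.e.~limit by definition. Collecting these observations, $x_n \to g$ a.e.~with $g$ real-valued and measurable, which is exactly the assertion.

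The main obstacle I anticipate is the clean identification of uo-convergence with a.e.~convergence in the required generality: I must be careful that the ambient space for the uo-convergence is the Banach function space $X$ (or its order completion) and that the cited equivalence between uo-null nets and a.e.~null sequences applies to $\sigma$-finite rather than merely finite measure spaces. Verifying that $X_n^\sim$ separates points is routine given $\sigma$-finiteness, but the translation of the abstract uo-Cauchy conclusion into the concrete statement that the increasing limit is \emph{finite} almost everywhere is where the real work lies, and it is precisely this finiteness that upgrades the trivial extended-real-valued supremum into a bona fide real-valued measurable function.
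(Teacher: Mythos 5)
Your overall route coincides with the paper's: verify that $X_n^\sim$ separates points, apply Theorem~\ref{uo-cauchy} to conclude the sequence is uo-Cauchy, and then use the identification of uo-Cauchy sequences in function spaces with a.e.~convergent sequences (\cite[Proposition~3.1]{GTX:16}) — the paper's proof is exactly these three steps. The genuine gap is in your justification of the separation hypothesis. Integration against an arbitrary fixed strictly positive $L_1$-density $w$ does \emph{not} in general define a functional on $X$: take $X=L_2(0,1)$ and $w(t)=t^{-3/4}$; then $w\in L_1$ and $w>0$ a.e., yet for $x(t)=t^{-1/4}\in L_2$ one has $\int_0^1 xw\,d\mu=\int_0^1 t^{-1}\,dt=\infty$, and no ``appropriate restriction'' produces a linear functional defined on all of $X$. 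What you actually need is a weight $w>0$ a.e.\ lying in the K\"othe dual of $X$, i.e.\ with $\int\abs{x}w\,d\mu<\infty$ for \emph{every} $x\in X$, and the existence of such a $w$ for an arbitrary Banach function space over a $\sigma$-finite measure is precisely Lozanovsky's theorem (\cite[Theorem~5.25]{AA:02}), which the paper cites at this point. That is a genuine theorem requiring a nontrivial exhaustion argument, not a routine consequence of $\sigma$-finiteness, so calling this verification ``routine'' mislocates where the difficulty of the corollary sits.

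Once the separation step is repaired by invoking Lozanovsky's theorem, the remainder of your argument is sound and matches the paper: uo-Cauchyness in $X$ passes to $L_0$ since the ideal $X$ is a regular sublattice (\cite[Theorem~3.2]{GTX:16}), uo-convergence of \emph{sequences} in $L_0$ over a $\sigma$-finite measure is a.e.~convergence, so the sequence is a.e.~Cauchy and hence converges a.e.~to a real-valued measurable function; your closing observation that uo-Cauchyness is exactly what upgrades the extended-real-valued supremum $\sup_n x_n$ to a finite a.e.~limit is the correct reading of the statement. One small bookkeeping caution: \cite[Proposition~3.1]{GTX:16} is about sequences, whereas uo-Cauchyness is defined via the doubly indexed net $(x_n-x_m)$, so the passage from ``uo-Cauchy'' to ``a.e.~Cauchy'' should be routed through the sequential reformulation $\inf_n\sup_{k,\ell\ge n}\abs{x_k-x_\ell}\wedge u=0$ (as in Lemma~\ref{uo-cauchy-equal} and \cite[Corollary~3.5]{GTX:16}); this is easily supplied but worth stating.
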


\begin{proof}
By Lozanovsky's Theorem (\cite[Theorem~5.25]{AA:02}), $X_n^\sim$ separates
points of $X$. By \cite[Proposition~3.1]{GTX:16}, being uo-Cauchy for a sequence
in a function space is the same as being almost everywhere convergent. Thus the
corollary follows from Theorem~\ref{uo-cauchy}.
\end{proof}

Recall that a vector lattice $X$ is said to be \term{laterally complete} if
every collection of mutually
disjoint positive vectors admits a supremum. A vector lattice is said to be
\term{universally complete} if it
is order complete and laterally complete.  For a vector lattice $X$, denote by
$X^{\mathrm{u}}$ its universal completion (cf.~\cite[Definition~23.19 and
Theorem~23.20]{AB:78}). It is proved in \cite[Corollary~3.12]{GTX:16} that a
uo-null (resp.~uo-Cauchy) sequence in $X$ is o-null (resp.~o-convergent) in
$X^{\rm {u}}$. The following example shows that this result in general fails
for nets.

\begin{example}
Put $X=\ell_\infty$. Then $(\ell_\infty)^{\rm u}=\mathbb{R}^{\mathbb{N}}$, the
vector
lattice of all real sequences. Direct the index set $\Gamma:=\mathbb{N}\times
\mathbb{N}$ by $(m',n')\geq (m,n)$ iff $m'\geq m$ and $n'\geq n$. 
We define a net $(x_\alpha)_{\alpha\in\Gamma}$ in $\ell_\infty$ as follows: For
any
$(m,n)\in \mathbb{N} \times \mathbb{N}$ and any $k\ge 1$, put
\begin{equation*}
x_{(m,n)}(k)=\left\{
\begin{array}{r@{\; ,\;}l}
m+n  &m \text{ or } n\leq k\\
\frac{1}{m+n} &  m \text{ and } n> k
\end{array}
\right.
\end{equation*}
This net is as desired:
\begin{enumerate}
\item For any $(m,n)\in\Gamma $, we have $x_{(m,n)}(k)=m+n$ for $k$ large
enough; therefore, $x_{(m,n)}\in \ell_\infty$.
\item For $k\geq 1$, denote by $e_k$ the sequence which is equal $1$ in the
$k$-th spot and $0$ elsewhere. We have $x_{(m,n)}(k)=\frac{1}{m+n}$ for $(m,n)$
large enough. Therefore, $x_{(m,n)}\wedge e_k=\frac{1}{m+n}e_k \xrightarrow{\rm
o} 0$ in
$\mathbb{R}^\mathbb{N}$ as $(m,n)\rightarrow\infty$. Since $D:=\{e_k\}_1^\infty$
satisfies the statement of Lemma~\ref{uo-simple}\eqref{uo-2}, it follows that
$x_{(m,n)}\xrightarrow{\rm uo}0$ in $\mathbb{R}^\mathbb{N}$ as
$(m,n)\rightarrow\infty$.
\item Finally, we claim that $(x_\alpha)$ has no tail which is order bounded in
$\mathbb{R}^\mathbb{N}$, and therefore, is not order convergent in
$\mathbb{R}^\mathbb{N}$. Suppose, otherwise, that some tail
$(x_{(m,n)})_{(m,n)\geq (m_0,n_0)}$ is order bounded by some $x\in
\mathbb{R}^\mathbb{N}$. Then for $k_0=m_0+n_0+1$, we have that
$(x_{(m,n)}(k_0))_{(m,n)\geq (m_0,n_0)}$ is bounded by $x(k_0)$, which is
absurd, since $x_{(m_0,n)}(k_0)=m_0+n\rightarrow\infty$.
\end{enumerate}
\end{example}

We show, however, that an uo-Cauchy net in $X$ is \emph{uo-convergent} in
$X^{\mathrm{u}}$.
A vector lattice is said to be \term{uo-complete} if any uo-Cauchy net is
uo-convergent. A closely related notion of bounded uo-completeness has been
studied in \cite{GX:14,GTX:16}.
The following simple lemma is direct verification and we omit the proof.

\begin{lemma}\label{uo-cauchy-equal}
 A net $(x_\alpha)$ in an order complete vector lattice $X$ with a weak unit
$u>0$ is uo-Cauchy iff $\inf_{\alpha}\sup_{\beta, \beta' \ge \alpha}
\abs{x_\beta-x_{\beta'}} \wedge u =0$.
\end{lemma}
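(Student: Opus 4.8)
The plan is to reduce everything to the $\inf$--$\sup$ characterization of order convergence recalled in the introduction, combined with Lemma~\ref{uo-simple} applied to the singleton $D=\{u\}$. Note first that since $u$ is a weak unit, the band it generates is all of $X$; equivalently, $\{u\}$ satisfies condition~\eqref{uo-2} of Lemma~\ref{uo-simple}, i.e.\ $x\in X_+$ and $x\wedge u=0$ force $x=0$. By definition, the uo-Cauchy property of $(x_\alpha)$ is the order convergence to $0$ of the doubly indexed net $\bigl(\abs{x_\beta-x_{\beta'}}\wedge y\bigr)_{(\beta,\beta')}$ for every $y\in X_+$; each such net is order bounded (by $y$), so in the order complete lattice $X$ this is equivalent to $\inf_{(\alpha,\alpha')}\sup_{(\beta,\beta')\ge(\alpha,\alpha')}\abs{x_\beta-x_{\beta'}}\wedge y=0$, where the index set is $\Gamma\times\Gamma$ with the product order.

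The first technical point is to replace the product-indexed $\inf$--$\sup$ by the single-indexed one appearing in the statement. I would show
\[
\inf_{(\alpha,\alpha')}\ \sup_{(\beta,\beta')\ge(\alpha,\alpha')}\abs{x_\beta-x_{\beta'}}\wedge u=\inf_{\alpha}\ \sup_{\beta,\beta'\ge\alpha}\abs{x_\beta-x_{\beta'}}\wedge u .
\]
The inequality $\ge$ is immediate since the diagonal $(\alpha,\alpha)$ is one admissible product index. For $\le$, given any $(\alpha,\alpha')$ use that $\Gamma$ is directed to pick $\gamma\ge\alpha,\alpha'$; then $\{(\beta,\beta'):\beta,\beta'\ge\gamma\}$ is contained in $\{(\beta,\beta')\ge(\alpha,\alpha')\}$, so the single-indexed $\sup$ at $\gamma$ is dominated by the product-indexed $\sup$ at $(\alpha,\alpha')$, and taking infima gives the claim.

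Granting these observations, the forward implication is immediate: if $(x_\alpha)$ is uo-Cauchy, take $y=u$ to get $\abs{x_\beta-x_{\beta'}}\wedge u\xrightarrow{\rm o}0$, hence the product infimum vanishes, hence so does the single-indexed one. For the converse, suppose the single-indexed infimum is $0$. By the index reduction and the $\inf$--$\sup$ criterion this says exactly $\abs{x_\beta-x_{\beta'}}\wedge u\xrightarrow{\rm o}0$. I then apply Lemma~\ref{uo-simple}, in the form \eqref{uo-2}$\Rightarrow$\eqref{uo-3}, to the net $\bigl(\abs{x_\beta-x_{\beta'}}\bigr)_{(\beta,\beta')}$ in $X_+$ with $D=\{u\}$: since this net meets $u$ in an order-null net and $\{u\}$ satisfies \eqref{uo-2}, it uo-converges to $0$, i.e.\ $(x_\alpha)$ is uo-Cauchy.

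I expect no serious obstacle, as the lemma is a genuine direct verification; the one step deserving care is the index reduction, where the directedness of $\Gamma$ is essential (without it the single-indexed quantity could be strictly larger). One should also keep in mind the two standing hypotheses, that $X$ is order complete, so the $\inf$--$\sup$ criterion is available, and that $u$ is a weak unit, so that $\{u\}$ triggers Lemma~\ref{uo-simple}.
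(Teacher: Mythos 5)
Your proof is correct, and strictly speaking there is nothing in the paper to compare it against: the authors state that Lemma~\ref{uo-cauchy-equal} ``is direct verification'' and omit the proof entirely. Your write-up is a faithful instantiation of that verification using exactly the paper's toolkit: the $\inf$--$\sup$ criterion for order convergence of order bounded nets (recalled in the introduction, legitimate here because each net $\abs{x_\beta-x_{\beta'}}\wedge u$ is order bounded by $u$ and $X$ is order complete), the reduction of the product-indexed $\inf$--$\sup$ over $\Gamma\times\Gamma$ to the single-indexed one (where, as you note, directedness of $\Gamma$ is the essential point), and Lemma~\ref{uo-simple}, \eqref{uo-2}$\Rightarrow$\eqref{uo-3}, applied with $D=\{u\}$ to upgrade $\abs{x_\beta-x_{\beta'}}\wedge u\xrightarrow{\rm o}0$ to the full uo-Cauchy property; this last step parallels the paper's own use of the sequential weak-unit criterion \cite[Corollary~3.5]{GTX:16} in the proof of Proposition~\ref{uo-complete}, so your route is entirely in the spirit of the text. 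Two small remarks. First, in the index-reduction step your labels $\ge$ and $\le$ are swapped relative to the displayed equality as written: the diagonal embedding $\alpha\mapsto(\alpha,\alpha)$ shows that the product-indexed infimum is dominated by the single-indexed one (left side $\le$ right side), while the directedness argument gives the reverse; since you supply correct arguments for both directions, the equality itself is unaffected. Second, one could bypass Lemma~\ref{uo-simple} altogether by setting $w_y:=\inf_\alpha\sup_{\beta,\beta'\ge\alpha}\abs{x_\beta-x_{\beta'}}\wedge y$ for $y\in X_+$ and checking $w_y\wedge u=0$ via the distributivity $\sup_\gamma(a_\gamma\wedge u)=(\sup_\gamma a_\gamma)\wedge u$, whence $w_y=0$ because $u$ is a weak unit --- but that computation is precisely what the proof of Lemma~\ref{uo-simple} encapsulates, so invoking the lemma is the cleaner choice.
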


\begin{proposition}\label{uo-complete}
Let $X$ be an order complete vector lattice. If $X$ is uo-complete, then it is
universally complete. Conversely, if $X$ is universally complete, and, in
addition, has the countable sup property, then it is uo-complete.
\end{proposition}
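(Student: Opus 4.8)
The plan is to prove the two implications separately; only the forward one uses uo-completeness and only the converse uses the countable sup property.

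For the forward direction, since $X$ is already order complete it suffices to show $X$ is laterally complete. So I would fix a family $(e_i)_{i\in I}$ of mutually disjoint positive vectors and consider the net $(x_F)$ indexed by the finite subsets $F\subseteq I$ directed by inclusion, where $x_F=\sum_{i\in F}e_i=\sup_{i\in F}e_i$. The first key step is to show $(x_F)$ is uo-Cauchy. Fixing $y\in X_+$, disjointness of the $e_i$ and distributivity give $x_F\wedge y=\sum_{i\in F}(e_i\wedge y)$, and a short computation with symmetric differences shows $\abs{x_F-x_{F'}}\wedge y=\abs{x_F\wedge y-x_{F'}\wedge y}$. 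Since $(x_F\wedge y)$ is a positive increasing net order bounded by $y$, Lemma~\ref{order-cauchy} says it is order Cauchy, so $\abs{x_F-x_{F'}}\wedge y\xrightarrow{\rm o}0$; as $y$ was arbitrary, $(x_F)$ is uo-Cauchy.

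By uo-completeness there is $x\in X$ with $x_F\xrightarrow{\rm uo}x$, and the second step is to check $x=\sup_i e_i$. Here I would use a device valid for any positive increasing net uo-converging to $x$: given $\alpha_0$, put $w=(x_{\alpha_0}-x)^+$; monotonicity gives $w\le\abs{x_\alpha-x}$ for $\alpha\ge\alpha_0$, so $\abs{x_\alpha-x}\wedge w=w$ eventually, while $\abs{x_\alpha-x}\wedge w\xrightarrow{\rm o}0$ forces $w=0$, i.e.~$x\ge x_{\alpha_0}$. The same argument applied to $v=(x-z)^+$ for an arbitrary upper bound $z$ of the net shows $x\le z$. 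Hence $x=\sup_F x_F=\sup_i e_i$, so $X$ is laterally complete and therefore universally complete.

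For the converse, I would first note that a universally complete lattice has a weak unit $u>0$: the supremum of a maximal disjoint family of positive vectors exists by lateral completeness and is a weak unit. Given a uo-Cauchy net $(x_\alpha)$, Lemma~\ref{uo-cauchy-equal} gives $p_\alpha:=\sup_{\beta,\beta'\ge\alpha}\abs{x_\beta-x_{\beta'}}\wedge u\downarrow 0$. The countable sup property now lets me extract an increasing sequence of indices $(\alpha_n)$ with $p_{\alpha_n}\downarrow 0$. Then $(x_{\alpha_n})$ is a uo-Cauchy sequence, so since $X=X^{\rm u}$ it is o-convergent in $X$ by \cite[Corollary~3.12]{GTX:16}, say $x_{\alpha_n}\xrightarrow{\rm uo}x$; in particular $\abs{x_{\alpha_m}-x}\wedge u\xrightarrow{\rm o}0$.

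The final step upgrades convergence against $u$ to full uo-convergence of the whole net. For $\beta\ge\alpha_n$, the truncation inequality $(a+b)\wedge u\le a\wedge u+b\wedge u$ together with $\abs{x_\beta-x_{\alpha_m}}\wedge u\le p_{\alpha_n}$ gives $\abs{x_\beta-x}\wedge u\le p_{\alpha_n}+\abs{x_{\alpha_m}-x}\wedge u$ for every $m$ with $\alpha_m\ge\alpha_n$; since $\inf_m\abs{x_{\alpha_m}-x}\wedge u=0$, taking the order infimum over such $m$ yields $\abs{x_\beta-x}\wedge u\le p_{\alpha_n}$. Consequently $\inf_\alpha\sup_{\beta\ge\alpha}\abs{x_\beta-x}\wedge u\le\inf_n p_{\alpha_n}=0$, so $\abs{x_\alpha-x}\wedge u\xrightarrow{\rm o}0$; note this bounds the infimum over \emph{all} $\alpha$ by the values at the $\alpha_n$, so $(\alpha_n)$ need not be cofinal. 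To reach an arbitrary $y\in X_+$, write $z_\alpha=\abs{x_\alpha-x}$ and use $y\wedge nu\uparrow y$ (as $u$ is a weak unit) together with $z_\alpha\wedge nu\le n(z_\alpha\wedge u)$ to get $z_\alpha\wedge y\le n(z_\alpha\wedge u)+(y-nu)^+$; hence $\inf_\alpha\sup_{\beta\ge\alpha}(z_\beta\wedge y)\le(y-nu)^+$ for every $n$, and $(y-nu)^+\downarrow 0$ forces $z_\alpha\wedge y\xrightarrow{\rm o}0$. Thus $x_\alpha\xrightarrow{\rm uo}x$. I expect the main obstacle to be exactly this net-to-sequence reduction and the verification that the limit of a possibly non-cofinal subsequence is the uo-limit of the entire net—the one point where the countable sup property is indispensable.
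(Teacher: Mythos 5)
Your proof is correct, and the overall skeleton matches the paper's: the same finite-subset net $(x_F)$ in the forward direction, and in the converse the same chain of weak unit, Lemma~\ref{uo-cauchy-equal}, countable-sup-property extraction of an increasing sequence $(\alpha_n)$, the sequential uo-completeness result from \cite{GTX:16} (you cite Corollary~3.12, the paper cites Theorem~3.10; both work since $X=X^{\rm u}$), and the triangle inequality against $u$. Where you genuinely diverge is the uo-Cauchy verification in the forward half: the paper tests only against the set $D=A\cup(B_A^{\rm d})_+$, observing that $\abs{x_\alpha-x_{\alpha'}}\wedge d$ is eventually $0$ for each $d\in D$, and then invokes Lemma~\ref{uo-simple} to upgrade this to uo-Cauchyness; you instead test against an arbitrary $y\in X_+$ directly, using the disjointness identity $x_F\wedge y=\sum_{i\in F}(e_i\wedge y)$, the symmetric-difference computation $\abs{x_F-x_{F'}}\wedge y=\abs{x_F\wedge y-x_{F'}\wedge y}$, and Lemma~\ref{order-cauchy} applied to the increasing order bounded net $(x_F\wedge y)$. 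Your route avoids Lemma~\ref{uo-simple} entirely and is self-contained, at the cost of the distributivity computation; the paper's route is shorter given that lemma, which it needs elsewhere anyway (Theorems~\ref{uo-cauchy} and \ref{uo-complete2}). You also spell out two points the paper compresses: the identification $x=\sup_F x_F$ via the $(x_{\alpha_0}-x)^+$ trick (the paper's ``it follows easily''), and the final upgrade from $\abs{x_\alpha-x}\wedge u\xrightarrow{\rm o}0$ to full uo-convergence via $z\wedge y\le n(z\wedge u)+(y-nu)^+$, which the paper leaves to the weak-unit criterion of \cite[Corollary~3.5]{GTX:16}. Your explicit remark that the $\alpha_n$ need not be cofinal, with the bound $\inf_\alpha\le\inf_n$ holding simply because each $\alpha_n$ is an index of the net, correctly handles the one delicate point that the paper passes over silently.
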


\begin{proof}
Suppose that $X$ is uo-complete and $A$ is a collection of mutually disjoint
positive vectors in $X$. Let
$\Gamma$ be the collection of all nonempty finite subsets of $A$, directed
upward by inclusion. For any $\alpha=\{a_1,..., a_n\} \in \Gamma$, put
$x_\alpha=a_1 \vee ... \vee a_n=\sum_1^na_k$. Then $(x_\alpha)$ is an increasing
net. 
We claim that $(x_\alpha)$ is uo-Cauchy in $X$. Indeed, let $D=A\cup
(B_A^{\mathrm d})_+$ where $B_A$ is the band generated by $A$. It is clear that
$D$ satisfies the  statement of Lemma~\ref{uo-simple}\eqref{uo-2}. Moreover, for
$0\leq d\in
B_A^{\mathrm d}$, we have $\abs{x_\alpha-x_{\alpha'}}\wedge d=0$, so that
$\abs{x_\alpha-x_{\alpha'}}\wedge d \xrightarrow{\rm o} 0$ in $X$; and for $a\in
A$, we have $|x_\alpha-x_{\alpha'}| \wedge a=0$ for all $(\alpha,\alpha') \ge
(\{a\}, \{a\})$, from which it also follows that
$\abs{x_\alpha-x_{\alpha'}}\wedge a\xrightarrow{\rm o}0$. This proves the claim
by Lemma~\ref{uo-simple}. Since $X$ is uo-complete,  there exists a vector $x\in
X$ such that $x_\alpha\xrightarrow{\rm uo} x$, from which it follows easily that
$x=\sup x_\alpha=\sup A$. 

Conversely, suppose that $X$ is universally complete and has the countable sup
property. Note that $X$ has a weak unit, say, $u>0$
(\cite[Theorem~23.2]{AB:78}).
Let $(x_\alpha)$ be a uo-Cauchy net in $X$. 
In view of Lemma~\ref{uo-cauchy-equal}, we can find, by the countable sup
property,
a sequence $(\alpha_n)$ such that 
$$\inf_{n\ge 1}\sup_{\beta, \beta' \ge \alpha_n}
\abs{x_\beta-x_{\beta'}} \wedge u =0.\eqno(1)$$
By reselecting, we can make $\alpha_n$'s increasing.
It is clear that
$$\inf_{n\ge 1}\sup_{k,\ell \ge n}\abs {x_{\alpha_k}-x_{\alpha_\ell}} \wedge u
=0,$$
i.e., $(x_{\alpha_n})$ is uo-Cauchy in $X$ by \cite[Corollary~3.5]{GTX:16}.
By~\cite[Theorem~3.10]{GTX:16},  there exists $x\in X$ such that
$x_{\alpha_n}\xrightarrow{\rm uo}x$, i.e., 
$$\inf_{n\ge 1}\sup_{m\ge n} \abs{x_{\alpha_m}-x} \wedge u =0.\eqno(2)$$
By (1) and (2), it follows from
$\abs{x_\beta-x}\wedge u\leq \abs{x_\beta-x_{\alpha_n}}\wedge
u+\abs{x_{\alpha_n}-x}\wedge u$ for any $\beta\geq \alpha_n$ that
$$\inf_{n\ge 1}\sup_{\beta \ge \alpha_n}|x_\beta-x|\wedge u=0.$$
Therefore, $\inf_{\alpha}\sup_{\beta \ge \alpha}\abs{x_\beta -x }\wedge u=0$,
and $x_\alpha \xrightarrow{\rm uo} x$ in $X$.
\end{proof}

\begin{lemma}\label{cou-sup}
Let $X$be a vector lattice with a weak unit $u>0$. If $X$ has the
countable sup property, then $X^{\mathrm{u}}$ also has the countable sup
property.
\end{lemma}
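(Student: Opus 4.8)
The plan is to reduce the statement to a single application of the countable sup property of $X$, localised at the weak unit $u$. First I would record two standing facts. Since $X$ is order dense in $X^{\mathrm u}$, the weak unit $u$ of $X$ remains a weak unit of $X^{\mathrm u}$: if $0<y\in X^{\mathrm u}$ with $y\wedge u=0$, order density produces $0<x\in X$ with $x\le y$, whence $x\wedge u=0$, contradicting that $u$ is a weak unit of $X$. Secondly, suprema of subsets of $X$ that already exist in $X$ are preserved in $X^{\mathrm u}$ (a standard consequence of order density: if $B\subseteq X$, $\sup_X B=s\in X$, and $g\in X^{\mathrm u}$ dominates $B$, then $g\wedge s\ge B$, and $g\wedge s<s$ would yield, by order density, some $0<x_0\le s-(g\wedge s)$ in $X$ with $s-x_0\in X$ an upper bound of $B$ strictly below $s$).

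Next I would translate the countable sup property into a statement about decreasing nets. Given $A\subseteq X^{\mathrm u}$ with $\sup A=s$, I replace $A$ by the upward directed set of its finite suprema and pass to the decreasing net $w_\lambda:=s-a_\lambda\downarrow 0$; it then suffices to extract a countable subfamily with infimum $0$. Capping with the weak unit, set $v_\lambda:=w_\lambda\wedge u$, so that $v_\lambda\downarrow 0$ in the order interval $[0,u]$.

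The heart of the argument is the following device. I would form, inside $X$, the set $T:=\{x\in X:0\le x\le u-v_\lambda\text{ for some }\lambda\}$. Order density gives $\sup\{x\in X_+:x\le u-v_\lambda\}=u-v_\lambda$, so $\sup T\ge\sup_\lambda(u-v_\lambda)=u$; and since $u\in X$ while any upper bound of $T$ in $X$ must dominate each $u-v_\lambda$ and hence $u$, in fact $\sup_X T=u$ with the supremum attained in $X$. This is exactly the configuration to which the countable sup property of $X$ applies: it yields a countable $\{x_n\}\subseteq T$ with $\sup_X\{x_n\}=u$, and by the preservation fact also $\sup_{X^{\mathrm u}}\{x_n\}=u$. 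Writing $x_n\le u-v_{\lambda_n}$ gives $v_{\lambda_n}\le u-x_n$ and $\inf_n(u-x_n)=u-\sup_n x_n=0$, hence $\inf_n v_{\lambda_n}=0$.

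Finally I would pass from the $v$'s back to the $w$'s. Choosing $\mu_n$ in the directed index set with $\mu_n$ increasing and $\mu_n\ge\lambda_1,\dots,\lambda_n$, the sequence $w_{\mu_n}$ decreases to some $w^\ast\ge 0$ with $w^\ast\wedge u=\inf_n(w_{\mu_n}\wedge u)\le\inf_n v_{\lambda_n}=0$; since $u$ is a weak unit of $X^{\mathrm u}$ this forces $w^\ast=0$, i.e. $\sup_n a_{\mu_n}=s$, and unpacking the finite suprema produces the desired countable subset of $A$. The main obstacle to anticipate is precisely that the countable sup property, as defined, only applies to subsets whose supremum lies in $X$, whereas the natural objects $w_\lambda,v_\lambda$ live outside $X$; the construction of $T$, whose supremum is the honest element $u\in X$, together with the weak-unit capping, is what converts the problem into one internal to $X$.
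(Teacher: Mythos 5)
Your proof is correct, and it takes a genuinely different route from the paper's. The paper first passes to the Dedekind completion: it invokes \cite[Theorem~32.9]{LZ:71} to lift the countable sup property from $X$ to $X^\delta$ and \cite[Theorem~23.18]{AB:78} to identify the universal completions of $X$ and $X^\delta$, so that it may assume $X$ is order complete and hence an \emph{ideal} of $X^{\mathrm{u}}$; it then caps a positive set $A$ with supremum $a$ at every multiple $nu$, so that $A\wedge nu\subseteq X$ and $a\wedge nu\in X$, applies the countable sup property of $X$ once for each $n$, and takes the union $A_0=\cup_n A_n$, concluding via $a\wedge nu\uparrow a$. You instead work directly with the order density of $X$ in $X^{\mathrm{u}}$: you cap the complementary decreasing net $w_\lambda=s-a_\lambda$ a single time at $u$, and your auxiliary set $T=\{x\in X:0\le x\le u-v_\lambda\text{ for some }\lambda\}$, whose supremum is the honest element $u\in X$, reduces everything to \emph{one} application of the countable sup property inside $X$. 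Your two standing facts (that $u$ remains a weak unit of $X^{\mathrm{u}}$, and that order density preserves suprema from $X$ to $X^{\mathrm{u}}$) are correctly proved inline, and the bookkeeping with $\mu_n\ge\lambda_1,\dots,\lambda_n$ and the unpacking of finite suprema into a countable subset of $A$ is sound. What your route buys: it is self-contained, avoiding the Dedekind completion, both external citations, and the ideal property of $X$ in $X^{\mathrm{u}}$; in fact it proves the more general statement that the countable sup property passes from a vector lattice with a weak unit to \emph{any} vector lattice containing it as an order dense sublattice --- even the order completeness of $X^{\mathrm{u}}$, which you use only to form $w^\ast=\inf_n w_{\mu_n}$, is dispensable, since one can argue directly that any upper bound $b$ of $(a_{\mu_n})$ satisfies $(s-b\wedge s)\wedge u\le\inf_n v_{\lambda_n}=0$ and hence $b\ge s$. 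What the paper's route buys is brevity given the quoted machinery, and a cap-at-$nu$ scheme whose reduction to the order complete case mirrors reductions used elsewhere in the paper.
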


\begin{proof}
It follows from order denseness of $X$ in $X^\delta$ that $u$ is also a weak
unit in $X^\delta$.
Since $X$ is order dense and majorizing in $X^\delta$, it is easily seen that
$X^\delta$ also has the countable sup property(\cite[Theorem~32.9]{LZ:71}).
Thus, since $X$ and $X^\delta$ have the same universal completion
(\cite[Theorem~23.18]{AB:78}), we may assume, by passing to $X^\delta$, that $X$
is order complete. Then $X$ is
an ideal of $X^\mathrm{u}$ (cf.~\cite[Theorem~2.31]{AB:06}).
Observe again that since $X$ is order dense in $X^{\rm u}$, $u$ is also a weak
unit in $X^\mathrm{u}$.

Now let $ A$ be a subset of $ (X^{\mathrm{u}})_+$ which has a supremum $a \in
X^\mathrm{u}$. For each
$n\geq 1$, we have $A\wedge nu:=\{a'\wedge nu:a'\in A\}\subset X$ and $a\wedge
nu\in X$. Moreover, since $a\wedge nu$ is the supremum of $A\wedge nu$ in
$X^\mathrm{u}$, it is also the supremum of $A\wedge nu$ in $X$. By the countable
sup
property of $X$, there exists a countable subset $A_n$ of $A$ such that $a\wedge
nu$ is the supremum of $A_n\wedge nu$ in $X$.

 We claim that the countable subset
$A_0:=\cup_nA_n$  satisfies $\sup A_0=a$. Indeed, the supremum of $A_0\wedge nu$
in
$X^\mathrm{u}$ is at least the supremum of $A_n\wedge nu$ in $X^\mathrm{u}$,
which is equal to $a\wedge nu$, the supremum of $A_n\wedge nu$ in $X$, since $X$
is an ideal in $X^\mathrm{u}$. Therefore, the supremum of $A_0$ in
$X^\mathrm{u}$ is at least $a\wedge nu$ for each $n\geq 1$, and is therefore at
least $a$, as $u$ is a weak unit in $X^\mathrm{u}$. The reverse inequality $\sup
A_0\leq a$ is obvious.
\end{proof}

\begin{theorem}\label{uo-complete2}
Let $X$ be a vector lattice, and $D$ be a maximal collection of disjoint
positive nonzero vectors in $X$. Suppose that the band $B_d$ generated by $d$
has the countable sup property for each $d\in D$. Then $X^{\mathrm{u}}$ is
uo-complete, and every
uo-Cauchy net in $X$ is uo-convergent in $X^{\mathrm{u}}$.
\end{theorem}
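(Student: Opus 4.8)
The plan is to reduce the second assertion to the first and then to prove that $X^{\mathrm{u}}$ is uo-complete by a localization argument. For the reduction: since $X$ is order dense in $X^{\mathrm{u}}$ it is a regular sublattice, so by \cite[Theorem~3.2]{GTX:16} a net in $X$ is uo-Cauchy in $X$ if and only if it is uo-Cauchy in $X^{\mathrm{u}}$; hence any uo-Cauchy net in $X$ is uo-Cauchy in $X^{\mathrm{u}}$ and, once uo-completeness of $X^{\mathrm{u}}$ is established, it uo-converges there. The subtle point is that one cannot simply apply Proposition~\ref{uo-complete} to $X^{\mathrm{u}}$: although $X^{\mathrm{u}}$ is universally complete, it need not enjoy the countable sup property (e.g.\ an uncountable product of copies of $\mathbb{R}$). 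Instead I would exploit the band decomposition furnished by $D$ and the fact that the countable sup property \emph{is} available on each piece.

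First I would record the structure. The band generated by $D$ in $X^{\mathrm{u}}$ is all of $X^{\mathrm{u}}$: if $0\le y\in X^{\mathrm{u}}$ were disjoint from every $d\in D$, order density of $X$ would produce some $x\in X$ with $0<x\le y$, and $x$ would then be a nonzero positive vector of $X$ disjoint from all of $D$, contradicting maximality. Thus $D$ is maximal in $X^{\mathrm{u}}$ as well. For each $d\in D$ let $\widetilde{B}_d$ be the band generated by $d$ in $X^{\mathrm{u}}$ and $P_d$ the corresponding band projection (available since $X^{\mathrm{u}}$ is order complete). Being a projection band of the universally complete lattice $X^{\mathrm{u}}$, each $\widetilde{B}_d$ is itself universally complete, and it contains the band $B_d$ as an order dense sublattice with the common weak unit $d$; hence $\widetilde{B}_d=(B_d)^{\mathrm{u}}$ by uniqueness of the universal completion. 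Since $B_d$ has a weak unit and the countable sup property, Lemma~\ref{cou-sup} shows $\widetilde{B}_d$ has the countable sup property, so Proposition~\ref{uo-complete} applies to $\widetilde{B}_d$: \emph{each $\widetilde{B}_d$ is uo-complete}.

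The heart of the argument is a localization of uo-convergence along the $\widetilde{B}_d$. For an arbitrary net $(y_\alpha)$ in $X^{\mathrm{u}}$, using that $P_d$ is a lattice homomorphism fixing $\widetilde{B}_d$, one has $\abs{y_\alpha}\wedge d=\abs{P_dy_\alpha}\wedge d$, and this element lies in $\widetilde{B}_d$, a regular sublattice; since $d$ is a weak unit of $\widetilde{B}_d$, the net $\abs{y_\alpha}\wedge d$ is o-null in $X^{\mathrm{u}}$ exactly when $(P_dy_\alpha)$ is uo-null in $\widetilde{B}_d$. Combined with Lemma~\ref{uo-simple} applied to the maximal system $D$ in $X^{\mathrm{u}}$, this yields: $y_\alpha\xrightarrow{\rm uo}0$ in $X^{\mathrm{u}}$ if and only if $P_dy_\alpha\xrightarrow{\rm uo}0$ in $\widetilde{B}_d$ for every $d\in D$. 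Applying this to difference nets gives the analogous statement for the uo-Cauchy property. Consequently, if $(y_\alpha)$ is uo-Cauchy in $X^{\mathrm{u}}$, then for each $d$ the net $(P_dy_\alpha)$ is uo-Cauchy in $\widetilde{B}_d$ and, by the uo-completeness just established, there is $y^{(d)}\in\widetilde{B}_d$ with $P_dy_\alpha\xrightarrow{\rm uo}y^{(d)}$.

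It remains to assemble the $y^{(d)}$ into a single uo-limit, and here I would invoke lateral completeness. The families $\{(y^{(d)})^+\}_{d\in D}$ and $\{(y^{(d)})^-\}_{d\in D}$ are disjoint positive systems, so $y^+:=\sup_d (y^{(d)})^+$ and $y^-:=\sup_d (y^{(d)})^-$ exist in $X^{\mathrm{u}}$; set $y:=y^+-y^-$. Since the $\widetilde{B}_d$ are pairwise disjoint bands and band projections are order continuous, $P_dy=y^{(d)}$ for every $d$. Then $P_d(y_\alpha-y)=P_dy_\alpha-y^{(d)}\xrightarrow{\rm uo}0$ in $\widetilde{B}_d$ for all $d$, so the localization criterion gives $y_\alpha\xrightarrow{\rm uo}y$ in $X^{\mathrm{u}}$. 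This proves $X^{\mathrm{u}}$ is uo-complete and, with the reduction above, that every uo-Cauchy net in $X$ uo-converges in $X^{\mathrm{u}}$. The main obstacle is precisely the failure of the countable sup property in $X^{\mathrm{u}}$: the whole strategy is designed to push the use of Proposition~\ref{uo-complete} down to the bands $\widetilde{B}_d$, where Lemma~\ref{cou-sup} restores it, and then to recombine via lateral completeness using the localization of uo-convergence.
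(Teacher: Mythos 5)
Your proposal is correct and follows essentially the same route as the paper's proof: both localize to the bands $B_d'$ generated by the $d\in D$ in $X^{\mathrm{u}}$, identify each as $(B_d)^{\mathrm{u}}$ so that Lemma~\ref{cou-sup} and Proposition~\ref{uo-complete} yield uo-completeness there, glue the local limits via lateral completeness, and conclude with Lemma~\ref{uo-simple} using maximality of $D$ in $X^{\mathrm{u}}$. The only (harmless) divergence is cosmetic: the paper reduces to positive nets at the outset, while you instead split each local limit $y^{(d)}$ into positive and negative parts before taking suprema.
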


\begin{proof}
By \cite[Theorem~3.2]{GTX:16}, every uo-Cauchy net in $X$ is uo-Cauchy in
$X^\mathrm{u}$. Thus it suffices to show that $X^\mathrm{u}$ is uo-complete.
Let $(x_\alpha)$ be a uo-Cauchy net in
$X^\mathrm{u}$. Since $\abs{x^\pm-y^\pm}\leq \abs{x-y}\leq
\abs{x^+-y^+}+\abs{x^--y^-}$, we may assume without loss of generality that 
$x_\alpha$'s and $x$ are positive.

For each $d\in D$, let $B_d'$ be the band generated by $d$ in $X^\mathrm{u}$. 
It is easily seen that $B'_d$ is universally complete. 
We claim that $B_d$ is order dense in $B_d'$. Indeed, for any $0\leq w\in B_d'$,
since $X$ is order dense in $X^\mathrm{u}$, there exists a net $(x_\alpha)$ in
$X_+$ such that $0\leq x_\alpha\uparrow w$ in $X^\mathrm{u}$. For any $y\in X_+$
with $y\wedge d=0$ in $X$, we have $y\perp B_d'$ in $X^\mathrm{u}$, and thus
$y\wedge w=0$. 
It follows that $ x_\alpha\perp y$, and thus $x_\alpha\in \{d\}^{\rm dd}=B_d$.
This proves the claim.
By \cite[Theorem~23.18]{AB:78}, $B'_d$ is the universal completion of $B_d$, and
therefore, by Lemma~\ref{cou-sup}, has the countable sup property.

Let $P_d$ be
the band projection from $X^\mathrm{u}$ onto $B_d'$. Then $(P_dx_\alpha)$ is
uo-Cauchy in $B_d'$ by \cite[Lemma~3.3]{GX:14}. By
Proposition~\ref{uo-complete}, one can find $0\leq x_d\in B_d'$ such that
$\abs{P_dx_\alpha-x_d}\xrightarrow{\rm uo}0$ in $B_d'$, and thus, in
$X^\mathrm{u}$ by \cite[Theorem~3.2]{GTX:16}. Since the $d$'s are disjoint, we
know that $B_d'$'s, and therefore, $x_d$'s, are disjoint. Let $x\geq 0$ be the
supremum of $x_d$'s in $X^\mathrm{u}$. It is clear that $P_dx=x_d$. Now we have
\begin{align*}\abs{x_\alpha-x}\wedge d=&P_d(\abs{x_\alpha-x}\wedge
d)=\abs{P_dx_\alpha-P_dx}\wedge d\\
=&\abs{P_dx_\alpha-x_d}\wedge d\xrightarrow{\rm
o}0\end{align*} in $X^\mathrm{u}$. Since $X$ is order dense in $X^\mathrm{u}$,
we have
that
$D$ is also a maximal collection of  disjoint positive nonzero vectors in
$X^\mathrm{u}$, and thus by Lemma~\ref{uo-simple}, $x_\alpha\xrightarrow{\rm
uo}x$ in $X^u$. 
\end{proof}

\begin{problem}
Can one remove the countable sup property assumption in
Proposition~\ref{uo-complete} and Theorem~\ref{uo-complete2}?
\end{problem}

\begin{corollary}\label{uo-complete3}
Let $X$ be a vector lattice such that $X^\sim_n$ separates points of $X$ (this
is satisfied if $X$ is an order continuous Banach lattice).
Then $X^{\mathrm{u}}$ is uo-complete, and every
uo-Cauchy net in $X$ is uo-convergent in $X^{\mathrm{u}}$.
\end{corollary}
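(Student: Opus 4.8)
The plan is to deduce this from Theorem~\ref{uo-complete2}, so the whole task reduces to exhibiting a maximal disjoint family $D$ of positive nonzero vectors in $X$ for which every band $B_d$, $d\in D$, has the countable sup property. The tool is the carrier construction already appearing in the proof of Theorem~\ref{uo-cauchy}: for each $0\le f\in X_n^\sim$ the null ideal $N_f=\{x:f(\abs x)=0\}$ is a band, its carrier $C_f=N_f^{\mathrm d}$ satisfies $C_f^{\mathrm d}=N_f^{\mathrm{dd}}=N_f$, and $f$ is strictly positive and order continuous on $C_f$. No reduction to the order completion is needed here, since Theorem~\ref{uo-complete2} is stated for arbitrary (Archimedean) vector lattices and carriers are bands regardless of completeness.

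First I would record the elementary fact that any vector lattice $Y$ admitting a strictly positive order continuous functional $g$ has the countable sup property: if $A\subseteq Y_+$ has supremum $a$, then, replacing $A$ by the set of its finite suprema, we may assume $A\uparrow a$, so that $a-a'\downarrow 0$ over $a'\in A$ and order continuity gives $\sup_{a'\in A}g(a')=g(a)$; choosing an increasing sequence $(a_n)\subseteq A$ with $g(a_n)\uparrow g(a)$, strict positivity of $g$ forces $\sup_n a_n=a$. Applying this with $Y=B_d$ and $g=f|_{B_d}$, which is again strictly positive and order continuous, shows that every band contained in some carrier $C_f$ has the countable sup property.

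Next, by Zorn's lemma I would choose a disjoint family $D$ of positive nonzero vectors, each lying in some carrier $C_f$, that is maximal among such families. The decisive step is to check that $D$ is then maximal among all disjoint families of positive nonzero vectors in $X$. Indeed, if some $0<x\in X$ satisfied $x\perp d$ for all $d\in D$, then, since $X_n^\sim$ separates points, there would be $0\le f\in X_n^\sim$ with $f(x)>0$; thus $x\notin N_f$, and, because $C_f^{\mathrm d}=N_f$, the vector $x$ is not disjoint from $C_f$, so there is $0\le z\in C_f$ with $w:=x\wedge z>0$. This $w$ lies in $C_f$ and satisfies $w\wedge d\le x\wedge d=0$ for every $d\in D$, so $D\cup\{w\}$ would be a strictly larger admissible family, a contradiction. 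For each $d\in D$ the band $B_d$ it generates lies inside the carrier $C_f$ containing $d$, hence has the countable sup property by the previous paragraph. Thus $D$ meets the hypotheses of Theorem~\ref{uo-complete2}, and the conclusion follows at once. I expect this maximality argument to be the main obstacle: the carriers need not be projection bands, so the separation hypothesis must be invoked to guarantee that any positive vector escaping $D$ still has a nonzero piece sitting inside a single carrier, which is exactly what allows one to enlarge $D$.
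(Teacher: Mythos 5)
Your proof is correct, but it reaches the key maximal family by a genuinely different route than the paper. The paper first takes a maximal collection $A=\{f_\alpha\}$ of pairwise disjoint nonzero positive functionals in $X_n^\sim$, then a maximal disjoint family $D_\alpha$ inside each carrier $C_{f_\alpha}$, and sets $D=\bigcup_\alpha D_\alpha$; verifying maximality of $D$ then requires two nontrivial tools: given $x>0$ with $x\perp D$, it deduces $B_x\perp C_{f_\alpha}$ for all $\alpha$, uses \cite[Theorem~1.22]{AB:06} to produce $0<g\in X_n^\sim$ with $C_g\subset B_x$, and then \cite[Theorem~1.67]{AB:06} to conclude $g\perp f_\alpha$ for all $\alpha$, contradicting maximality of $A$. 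You instead run Zorn's lemma once, directly on disjoint families of positive vectors each lying in \emph{some} carrier, and your maximality check is purely lattice-theoretic: order continuity of $f$ makes $N_f$ a band, so by the Archimedean property (the paper's standing assumption) $C_f^{\mathrm d}=N_f^{\mathrm{dd}}=N_f$, whence any $x>0$ with $f(x)>0$ has a nonzero component $w=x\wedge z$ inside $C_f$ disjoint from $D$, enlarging the family. This avoids both citations to \cite{AB:06} and the auxiliary maximal family of functionals, at the cost of nothing; it is arguably the cleaner argument, and your observation that $C_f^{\mathrm d}=N_f$ is exactly where order continuity of $f$ is indispensable (for merely positive $f$ the null ideal need not be a band) is a correct diagnosis of the crux. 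You also prove directly the fact that a strictly positive order continuous functional forces the countable sup property, where the paper simply cites \cite[Theorem~2.6]{AB:78}; your sketch (pass to finite suprema, extract $(a_n)$ with $g(a_n)\uparrow g(a)$, and for any upper bound $c$ of $(a_n)$ note $g(a\wedge c)=g(a)$ forces $a\wedge c=a$ by strict positivity) is sound, though you should state the last step about upper bounds explicitly rather than leaving it at ``strict positivity forces $\sup_n a_n=a$.'' The remaining steps --- $B_d\subseteq C_f$ inherits the countable sup property, and the reduction to Theorem~\ref{uo-complete2} --- coincide with the paper's.
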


\begin{proof}
Let $A=\{f_\alpha:\alpha\in\Lambda\}$ be a maximal collection of  pairwise
disjoint nonzero positive functionals in $X^\sim_n$. For each $\alpha\in
\Lambda$, let $D_\alpha$ be a maximal collection of pairwise disjoint nonzero
positive vectors in the carrier $C_\alpha$ of $f_\alpha$. Put $D=\cup_{\alpha
\in \Lambda} D_\alpha$.  
Since $f_\alpha$ acts as a strictly positive functional on $C_\alpha$, each
$C_\alpha$ has the countable sup property by
~\cite[Theorem~2.6]{AB:78}. It follows that the principal band $B_d$ has the countable sup
property for each $d\in D$.
In view of Theorem~\ref{uo-complete2}, it  suffices to show that $D$ is a maximal
collection of pairwise disjoint nonzero positive vectors in $X$. Let $x>0$
such that $x\perp D$. Then for each $\alpha\in\Lambda$, $x\perp D_\alpha$, and
thus $x\perp C_\alpha$; for otherwise there would exist $0<y\in C_\alpha$ such that
$x\wedge y>0$, but $x\wedge y\in C_\alpha$ and $x\wedge y\perp D_\alpha$, contradicting the maximality of $D_\alpha$.
Therefore, $$B_x\perp C_\alpha$$ for each $\alpha$.
Since $X^\sim_n$
separates points of $X$, there exists $0\leq f\in X^\sim_n$ such that $f(x)>0$.
By ~\cite[Theorem~1.22]{AB:06}, there exists a positive linear functional $g$ on
$X$ such that $0\leq g\leq f$, $f(x)=g(x)$ and $g(y)=0$ for all $y\perp x$. Clearly, $0<g\in X_n^\sim$, and $$ C_g\subset B_x.$$
Therefore, $C_g\perp C_\alpha$ for each $\alpha$.
By \cite[Theorem~1.67]{AB:06}, $g\perp f_\alpha$ for each $\alpha\in \Lambda$, contradicting the maximality of $A$.
\end{proof}

\noindent \textbf{Acknowledgement.} The authors thank Dr.~Niushan Gao for many
valuable discussions and thank the reviewers for carefully reading the paper and
providing many suggestions. The first author is grateful to Dr.~Niushan Gao for
his guidance.

\end{document}